\newcommand{\bS}{\mathbb S}
\newcommand{\cW}{\mathcal{W}}
\newcommand{\tcW}{\widetilde{\cW}}
\newcommand{\wAH}{\widehat{AH}}
\newcommand{\bom}{\mathbf{\omega}}
\newcommand{\CC}{\mathbb C}
\newcommand{\RR}{\mathbb R}
\newcommand{\PP}{\mathbb P}
\newcommand{\calM}{{\mathcal M}}
\newcommand{\calO}{{\mathcal O}}
\newcommand{\calU}{{\mathcal U}}
\newtheorem{theorem}{Theorem}
\newtheorem{corollary}{Corollary}
\theoremstyle{definition}
\newtheorem{definition}{Definition}
\theoremstyle{remark}
\newtheorem*{remark}{Remark}
\title[]{A gluing construction of $D_{k}$ ALF gravitational instantons and existence of non-holomorphic minimal spheres} 
\author{Xuwen Zhu}
\address{Northeastern University, Boston, MA 02115}
\email{x.zhu@northeastern.edu}
\date{}
\begin{document}

\maketitle

\begin{abstract}
This note extends the construction of $D_{k}$ ALF gravitational instantons in Schroers--Singer~\cite{SS} to a new case where the nonlinear superposition  is given by the $D_{1}$ Atiyah--Hitchin metric and $k-1$ copies of $A_{0}$ Taub-NUT metrics.  We then give a general class of ALF spaces such that each of them contains a non-holomorphic minimal sphere. Together with Foscolo's construction~\cite{Foscolo} this gives a large class of $K3$ surfaces containing non-holomorphic minimal spheres. 
\end{abstract}

\section{Introduction}	
A gravitational instanton \footnote{The term "gravitational instanton" is debated and might mean a different object for other authors. In this paper we only discuss the hyperK\"ahler case.} is a complete noncompact real 4-dimension hyperK\"ahler manifold with $L^{2}$ curvature decay at infinity. There has been a long history in the  study of gravitational instantons and a lot of recent developments have been made. Gravitational instantons arise as building blocks for degeneration of K3 surfaces~\cite{GW, Foscolo, HSVZ,CVZ}, which makes them powerful tools in probing $K3$ surfaces near degeneration limits.

The gravitational instantons have been classified into six types, ALE/
ALF/ALG/ALH/ALG*/ALH*, based on the end structure at infinity~\cite{Minerbe, CC, SunZhang}. In this paper we will discuss the ALF gravitational instantons, which is modeled  at infinity by quotients of circle bundles over $\RR^{3}$  and has cubic volume growth. There are two families of ALF spaces depending on the monodromy at infinity being cyclic or dihedral, $A_{k}$ and $D_{k}$, labelled by non-negative integers $k$. In this paper we will discuss the  construction of $D_{k}$ type ALF spaces via a nonlinear superposition of $D_{1}$ and $k-1$ copies of $A_{0}$ ALF spaces, and using this we will show the existence of non-holomorphic minimal spheres in these spaces as well as in $K3$ surfaces. 

This gluing construction extends the work by Schroers and Singer \cite{SS}, where the authors constructed $D_{k}$ ALF gravitational instantons by gluing the moduli space $\calM_{2}^{0}$ of centered SU(2) monopoles (which is the only $D_{0}$ ALF space) with $k$ copies of Taub-NUT space (each of which is an $A_{0}$ ALF space). As commented in their paper, this construction can be extended to other cases. In this paper we work on the case where $\calM_{2}^{0}$  is replaced by the Atiyah--Hitchin metric AH~\cite{AH}, which is defined to be the double cover of $\calM_{2}^{0}$ and is a rotationally invariant $D_{1}$ space. Gluing the AH metric with $k-1$ copies of Taub-NUT space (one copy fewer than~\cite{SS}), we get a $D_{k}$ space.  A rough statement of the theorem is given below and a more explicit description will be given at Theorem~\ref{thm:main}.
\begin{theorem}
Given any $k\geq 2$ and any set of $k$ distinct points in $\RR^{3}$ denoted as $\{0, p_{1}, \dots, p_{k-1}\}$ , there exists $\epsilon_{0}>0$, such that for any $\epsilon\in (0,\epsilon_{0})$ there exists a $D_{k}$ ALF metric $g_{\epsilon}$ which is a nonlinear superposition of the Atiyah--Hitchin (AH) metric centered at 0 and $k-1$ copies of Taub-NUT metrics (TN) centered at $p_{i}/\epsilon, 1\leq i\leq k-1$. 
\end{theorem}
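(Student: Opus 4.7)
The proof would adapt the gluing framework of Schroers--Singer~\cite{SS} to handle the case where the central piece is the Atiyah--Hitchin metric $g_{AH}$ instead of $\calM_2^0$. The essential geometric input is the matching of ends: $g_{AH}$, as a $D_1$ ALF metric, is exponentially close at its end to a $\ZZ_2$ quotient of a negative-mass Taub--NUT, while each Taub--NUT metric $g_{TN,i}$ centered at $p_i/\e$ has a positive-mass $A_0$ ALF end. After rescaling by $\e$, both families of ends agree to leading order with the flat $\RR^3\times S^1$ model in the transition region where the gluing is performed, so that their hyperK\"ahler triples can be interpolated in an appropriate Gibbons--Hawking gauge.

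Working with the closed hyperK\"ahler triple formulation $\bom=(\omega_1,\omega_2,\omega_3)$, I would carry out the argument in four steps. First, construct an approximate triple $\bom^{\mathrm{app}}_\e$ by using cutoff functions to patch the AH triple near the origin with the Taub--NUT triples near each $p_i/\e$, with smooth interpolation in annular transition regions. Second, estimate the error $\calE(\bom^{\mathrm{app}}_\e)$ measuring the failure of the algebraic compatibility $\omega_i\wedge\omega_j=\delta_{ij}\,\mathrm{vol}$ and of closedness; this error is supported in the gluing annuli and, by the known decay rates of the two models toward $\RR^3\times S^1$, is $O(\e^a)$ for some $a>0$ in a suitable weighted space. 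Third, linearize the hyperK\"ahler equation at $\bom^{\mathrm{app}}_\e$ to obtain an elliptic operator $L_\e$ on the family of collapsing metrics, and invert it uniformly in $\e$ by building a parametrix from model inverses on $AH$ and on each Taub--NUT and patching via the cutoffs. Fourth, apply a contraction mapping to correct $\bom^{\mathrm{app}}_\e$ to an exact hyperK\"ahler triple $\bom_\e$, which defines the desired $D_{k+1}$ ALF metric $g_\e$.

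The principal obstacle lies in the third step, the uniform invertibility of $L_\e$ as $\e\to 0$. Each model piece carries finite-dimensional $L^2$ cokernel contributions: the Atiyah--Hitchin metric admits hyperK\"ahler deformations compatible with its $SO(3)$ symmetry, and each Taub--NUT admits a translational and phase deformation family parametrized by its center and fiber gauge. These obstruction elements must be balanced against the free gluing parameters --- the positions $p_i/\e$ and the relative phases of the Taub--NUT factors --- so that, modulo this finite-dimensional matching condition, a right-parametrix for $L_\e$ exists with bound uniform in $\e$ in appropriate weighted H\"older or Sobolev norms. Extending the corresponding analysis of~\cite{SS} to the AH center, whose Fredholm theory, symmetries and asymptotic structure differ from those of $\calM_2^0$, constitutes the bulk of the analytic work; once this parametrix is established, the error estimates of Step~2 are strong enough to drive the Newton iteration of Step~4 to convergence by standard fixed-point arguments.
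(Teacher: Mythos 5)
Your overall architecture (approximate triple by cutoff patching, error estimate, uniform inversion of the linearization, fixed point) is the same as the paper's, but there is a genuine structural gap: you never address how the glued space acquires a \emph{dihedral} end, and your setup as written would not produce one. The Atiyah--Hitchin end is already a $\ZZ_2$ quotient, so you cannot simply attach $k$ Taub--NUT pieces at $p_i/\e$ to it; the circle fibration and monodromy at infinity would not close up to a $D_{k+1}$ ALF structure. The paper's construction works equivariantly on a double cover: the Gibbons--Hawking harmonic function carries $2k$ NUT singularities in symmetric pairs $\pm p_v$ (each with half weight) together with a \emph{negative} point mass $-\e/|x|$ at the origin, the central piece glued in is the branched double cover $\wAH$ of AH (not AH itself), and only after gluing does one quotient by the lifted involution $\iota:x\mapsto -x$ to obtain the $D_{k+1}$ asymptotics, with $h_\e = 1+\e(2k-2)/(2|x|)+O(\e|x|^{-3})$ even under $\iota$. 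Relatedly, the transition region is not modelled on flat $\RR^3\times S^1$: it is a collapsing (adiabatic) Gibbons--Hawking neck over all of $\RR^3\setminus\{0,\pm p_v\}$, which in the paper becomes its own boundary face $X_{ad}$ of a resolved total space, on which the linearized equation is inverted by Fourier analysis in the fiber and Green's functions on the base. Without the symmetrization and the quotient, the theorem as stated is not reached.

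A secondary point: the ``principal obstacle'' you identify --- finite-dimensional $L^2$ cokernels from AH and Taub--NUT deformations that must be balanced against gluing parameters --- does not arise in this framework. In the hyperK\"ahler triple formulation the linearization of $Q$ reduces to copies of the scalar Laplacian, which is invertible on the relevant weighted spaces over strongly ALF pieces; the paper's inverse is built by patching model inverses on the blown-up faces $X_\nu$ and $X_{ad}$ with no obstruction matching. The paper also first improves the $O(\e^3)$ error to an $\e^\infty$ formal series solution on the corner structure before applying the implicit function theorem, rather than running a contraction directly from a polynomially small error; your shortcut could work but would require you to control exactly how many powers of $\e$ the uniform inverse loses, which you have not addressed.
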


There has been a lot of progress in the analysis and construction of ALF spaces~\cite{MR2511656, Minerbe, CC2, BiMi}. The $A_{k}$ type ALF has been well understood as a multi-Taub-NUT space via gluing construction~\cite{Minerbe2, GH}. And $D_{k}$ spaces have been constructed via twistor methods~\cite{MR0757213} and desingularization~\cite{BiMi, Auvray, MR1231230}.
The construction in~\cite{SS} gives an explicit gluing constructions of $D_{k}$ type ALF based on Sen's proposal~\cite{Sen}, and the construction follows the physical interpretation of modeling electrically charged particles like protons and electrons~\cite{AMS}. There are also works by Cherkis, Kapustin, Hitchin, Roček et.al~\cite{Cherkis1, Cherkis2, Cherkis3, LR, IR} describing ALF spaces as moduli space of monopoles with prescribed singularities.  The construction in~\cite{SS} gave $D_{k}$ ALF when the $k$ Taub-NUT  singularities are well separated from the origin, while the ones constructed in this paper can be viewed as the limit of these $D_{k}$ spaces when one of these Taub-NUT singularities approaches zero.

It can be potentially further generalized, as suggested in~\cite{SS}, to cases of composing a $D_{j}$ ALF space with various $A_{\nu_{i}-1}$ ALF to generate other  $D_{k'}$ spaces, where $k'=j+\sum \nu_{i}$, which would give other limiting cases when more TN singularities travel to zero or to each other.  Since in this paper we are mainly interested in using the property of Atiyah--Hitchin space, we will only work on the case where the center piece is exactly given by AH.

Our construction follows the same procedure and the analysis is done following~\cite{SS}, as the change of the center piece from $\calM_{0}^{2}$ to $AH$ does not change the asymptotic behavior. The mapping properties of such metrics can be analyzed using the fiber boundary calculus, or $\phi-$calculus~\cite{MMphi, Vail, HHM}, which captures the asymptotic metric structure of ALF spaces.  

As a consequence of the construction, we obtain a family of $D_{k}$ spaces that, with properly chosen marked points, degenerate to the Atiyah--Hitchin space as the Gromov Hausdorff limit. One special feature of the AH space is that it contains a minimal sphere that is not holomorphic with any of the complex structures in the hyperK\"ahler rotation~\cite{MW2}. In the same paper it was shown that this minimal sphere is stable, i.e. the second variation of the area functional is nonnegative. In fact this sphere is shown to be "strongly stable" and area minimizing by Tsai and Wang~\cite{TsaiWang}. This special sphere was also recently discussed in the compactness of Fueter sections~\cite{EL}. 
 With our gluing construction we can obtain such special sphere in all nearby $D_{k}$ space.

\begin{theorem}[same as Theorem~\ref{thm2}]
For any $k\geq 2$, there is a family of $D_{k}$ ALF metrics $\{g_{\epsilon}\}_{\epsilon\in (0,\epsilon)}$, such that each of them contains a stable minimal sphere that is not holomorphic with respect to any of the compatible complex structures in the hyperK\"ahler rotation.  
\end{theorem}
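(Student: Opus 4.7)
The plan is to construct $\Sigma_\epsilon$ as a small perturbation of the minimal sphere $\Sigma_0 \subset AH$ produced in \cite{MW2}, exploiting the fact that Theorem~\ref{thm:main} yields smooth convergence $g_\epsilon \to g_{AH}$ on any fixed compact subset of $AH$ (since the Taub-NUT centers $p_i/\epsilon$ recede to infinity as $\epsilon \to 0$). The target is to set up a contraction mapping for the mean-curvature operator of $g_\epsilon$ in a neighborhood of $\Sigma_0$ and then upgrade non-holomorphicity from $\Sigma_0$ to $\Sigma_\epsilon$ by continuity on the twistor sphere.

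First I would collect the properties of $\Sigma_0$. By \cite{MW2}, $\Sigma_0$ is a closed, stably minimal embedded two-sphere in $(AH, g_{AH})$, not $J$-holomorphic for any $J$ in the twistor $S^2$ of $g_{AH}$. Stability makes the Jacobi operator $L = -\Delta^\perp - |A|^2 - \mathrm{Ric}(\nu,\nu)$ on the normal bundle of $\Sigma_0$ nonnegative; I would then argue that, modulo the Jacobi fields generated by the $SO(3)$-symmetry of $AH$ acting on $\Sigma_0$, $L$ has trivial kernel. Fixing a slice transverse to this symmetry action in the space of $C^{2,\alpha}$ embeddings close to $\Sigma_0$ makes $\Sigma_0$ a non-degenerate zero of the mean-curvature map.

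Next, for small $\epsilon$ the inclusion $\Sigma_0 \hookrightarrow (M_\epsilon, g_\epsilon)$ has mean curvature of size $o(1)$ (polynomial in $\epsilon$, controlled by the far-field asymptotics of Taub-NUT entering the superposition). I would apply the implicit function theorem to the mean-curvature operator $F_\epsilon : C^{2,\alpha}(N\Sigma_0) \to C^{0,\alpha}(N\Sigma_0)$ associated to $g_\epsilon$, restricted to the chosen slice. Since the linearization of $F_0$ at $\Sigma_0$ along the slice is the invertible operator $L$, and $F_\epsilon - F_0$ is small in operator norm on a fixed neighborhood of the zero section, a Banach fixed-point argument (of the same flavor as the gluing step in Theorem~\ref{thm:main}) produces a unique embedded minimal sphere $\Sigma_\epsilon$ with $\Sigma_\epsilon \to \Sigma_0$ smoothly as $\epsilon \to 0$. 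Non-holomorphicity then follows by a closedness argument: the twistor sphere $S^2_\epsilon$ of $(M_\epsilon,g_\epsilon)$ restricts on any fixed compact set to an almost complex structure family converging smoothly to $S^2_{AH}$, and the condition $J(T\Sigma)=T\Sigma$ is closed, so a sequence $\epsilon_n \to 0$ with $\Sigma_{\epsilon_n}$ being $J_n$-holomorphic would yield, on a subsequence, $J_\infty \in S^2_{AH}$ with $J_\infty(T\Sigma_0)=T\Sigma_0$, contradicting \cite{MW2}.

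The main technical obstacle is the non-degeneracy of $\Sigma_0$ in the slice: I need to identify all normal Jacobi fields on $\Sigma_0$ and show that the only ones are those generated by the $SO(3)$-action, so that $L$ is invertible after gauge fixing. A secondary subtlety is that $g_\epsilon$ loses the full $SO(3)$-symmetry of $g_{AH}$ (the Taub-NUT centers break it to a finite group), so the slice construction made for $g_{AH}$ only serves as a good gauge for $g_\epsilon$; however, the symmetry-breaking contribution decays like an inverse power of $|p_i/\epsilon|$ and is absorbed by the contraction argument, which is essentially the same analytic mechanism that powers the main gluing theorem.
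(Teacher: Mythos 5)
Your proposal is essentially sound, and the existence step coincides with the paper's: the paper also starts from the fact that the Micallef--Wolfson sphere sits in a fixed compact core of $AH$, hence lies in the region where the glued metric agrees with (a small deformation of) $g_{AH}$ for small $\epsilon$, and then perturbs it to a minimal sphere for the exact metric $g_{Se,\epsilon}$ -- the paper simply packages your contraction-mapping argument as an application of White's implicit function theorem. Two remarks on that step: your worry about Jacobi fields generated by the $SO(3)$-action is a red herring, since the $SO(3)$-action on $AH$ preserves the central sphere setwise and so produces no nontrivial normal Jacobi fields; and the nondegeneracy of the Jacobi operator (which White's theorem also presupposes) is what both arguments ultimately lean on from the stability analysis in \cite{MW2}. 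Where you genuinely diverge from the paper is the non-holomorphicity step. You argue by contradiction along a sequence $\epsilon_n\to 0$, using closedness of the condition $J(T\Sigma)=T\Sigma$ under convergence of the twistor families; this works but requires tracking the convergence of the compatible complex structures on compact sets and only rules out holomorphicity for $\epsilon$ small in a non-quantitative way. The paper instead uses a purely topological obstruction: the sphere has self-intersection $[\Sigma]\cdot[\Sigma]=-4\neq 2\,\mathrm{genus}(\Sigma)-2=-2$, so by the adjunction formula it can never be holomorphic for any compatible complex structure on a hyperK\"ahler $4$-manifold. Since the self-intersection number is a homological invariant preserved under the perturbation, this gives non-holomorphicity of $\Sigma_\epsilon$ directly for each $\epsilon$, with no limiting argument. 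You may want to adopt that mechanism: it is both shorter and strictly stronger than the compactness argument.
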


There has been a lot of recent developments relating to minimal surfaces in the context of gravitational instantons. Lotay and Oliveira~\cite{LotayOliveira} classified all $\bS^{1}$-invariant minimal surfaces in Gibbons--Hawking ansatz.
Foscolo and Trinca~\cite{FoscoloTrinca} recently constructed new non-holomorphic minimal 2-spheres in multi-Taub--NUT spaces and K3 surfaces using Scherk surface and holomorphic cigars, yet those 2-spheres are unstable. For more general discussion regarding (non-)holomorphic minimal surfaces, we also refer to~\cite{Arezzo, Donaldson, MW1}. 

In~\cite{Foscolo} Foscolo constructed a collapsing sequence of K3 surfaces using ALF spaces as part of the building blocks.  As a corollary he used Atiyah--Hitchin space as the building block to show the existence of minimal non-holomorphic sphere in some K3 surface near the degeneration limit. Now with more ALF spaces constructed above with non-holomorphic spheres, we extend this result of Foscolo and show that there is a neighborhood in the moduli space of K3 surfaces such that each of these has a special sphere. 

\begin{corollary}[same as Corollary~\ref{cor1}]
There is an open neighborhood in the moduli space of hyperK\"ahler metrics on $K3$, such that each metric in this neighborhood contains a stable minimal sphere that is not holomorphic with respect to any compatible complex structures. 
\end{corollary}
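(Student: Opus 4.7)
The plan is to combine the $D_{k+1}$ ALF family from Theorem~\ref{thm2} with Foscolo's collapsing K3 construction~\cite{Foscolo} to obtain a K3 metric containing the desired sphere, and then to upgrade this to an open neighborhood in the moduli space via stability and a compactness argument.

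First, I would feed the newly constructed $D_{k+1}$ ALF metric $g_\epsilon$ (for $k>1$) into Foscolo's K3 construction in place of a generic $D_{k+1}$ ALF building block. The asymptotic geometry of $g_\epsilon$ matches that of the standard $D_{k+1}$ ALF models used in~\cite{Foscolo}, so this substitution produces a hyperK\"ahler K3 metric $G_\epsilon$ whose $D_{k+1}$-region is a small perturbation of $g_\epsilon$. The stably minimal, non-holomorphic sphere $S_\epsilon$ provided by Theorem~\ref{thm2} is localized near the Atiyah--Hitchin core of $g_\epsilon$, which lies deep inside the $D_{k+1}$ piece and far from the gluing annulus; hence on a neighborhood of $S_\epsilon$ the metric $G_\epsilon$ differs from $g_\epsilon$ by an arbitrarily small $C^\infty$ error.

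Next I would invoke the stability of $S_\epsilon$: the second variation of area is uniformly positive, so the implicit function theorem applied to the minimal surface equation on $(K3,G_\epsilon)$ yields a nearby stable minimal sphere $\wt{S}_\epsilon$. Non-holomorphicity of $\wt{S}_\epsilon$ with respect to every compatible complex structure is preserved because the function assigning to a pair (tangent 2-plane $\Pi$, complex structure $J$ in the twistor $S^2$) the angle between $\Pi$ and $J\Pi$ is continuous, and its minimum over the compact twistor $S^2$ at $S_\epsilon$ is strictly positive. To promote this to an open neighborhood in moduli, I would run the same argument at $G_\epsilon$ itself: for any hyperK\"ahler metric $G$ sufficiently close to $G_\epsilon$, stability and the implicit function theorem produce a nearby stable minimal sphere $S(G)$, and the compactness/continuity argument again rules out holomorphicity. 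Since these properties are diffeomorphism-invariant, the resulting open set in the space of metrics descends to an open neighborhood of $[G_\epsilon]$ in the moduli space of hyperK\"ahler metrics on $K3$.

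The main obstacle will be to verify that Foscolo's gluing accommodates $g_\epsilon$ and not only the standard $D_{k+1}$ ALF models he treats. Concretely, one needs to check that the cohomological and period matching conditions at the ALF end of $g_\epsilon$ are compatible with the other building blocks of~\cite{Foscolo}, and that his weighted $L^2$/linearization error estimates are insensitive to the choice of $D_{k+1}$ input. Because $g_\epsilon$ is asymptotic to the standard $D_{k+1}$ ALF model and was itself produced by a $\phi$-calculus analysis of the same flavor as the one underlying~\cite{Foscolo}, this should reduce to a largely formal verification, but it is the step that genuinely uses Foscolo's gluing technology. A secondary point is to confirm that $S_\epsilon$ lies in a fixed compact region of $g_\epsilon$ that is untouched by the K3 gluing modification, which should follow from the description of $S_\epsilon$ as a small perturbation of the Micallef--Wolfson sphere in AH, itself supported near the AH bolt.
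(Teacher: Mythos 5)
Your proposal follows essentially the same route as the paper: insert the new $D_{k+1}$ ALF metric as a building block in Foscolo's collapsing K3 construction, observe that the Micallef--Wolfson sphere sits in the approximate metric far from the gluing region, and perturb it via White's implicit function theorem, with openness in moduli following from the same stability argument. The only minor divergence is that the paper rules out holomorphicity by the adjunction obstruction $[\Sigma]\cdot[\Sigma]=-4\neq 2\,\mathrm{genus}(\Sigma)-2$, which is more robust than your angle-continuity argument over the twistor sphere, but both suffice.
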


This corollary is connected to one interpretation of the Weak Gravity Conjecture~\cite{Vafa}, which predicts the existence of ``non-BPS'' particles. In mathematics  such particles can be interpreted as non-holomorphic submanifolds in Calabi--Yau manifolds in the large complex structure limit~\cite{MR4204112, MR4090116, MR4580537}. The corollary above can be seen as one of the examples.

The paper is organized in the following order: Section 2 describes the two building blocks, the multi-Taub-NUT space and the Atiyah--Hitchin metric, then gives the gluing construction and perturbative argument to find the hyperK\"ahler metric. Section 3 proves the existence of non-holomorphic spheres in some $D_{k}$ ALF spaces and $K3$ surfaces.

\vskip 0.1in

\noindent\textbf{Acknowledgement: }The author would like to thank Lorenzo Foscolo, Rafe Mazzeo, Michael Singer and Saman Habibi Esfahani for helpful discussions, and the anonymous referee for the careful reading and valuable feedback. The author is partially supported by NSF DMS-2305363.

\section{The gluing construction of $D_{k}$ ALF spaces}
The gluing construction is a modification from~\cite{SS}, and we describe the construction below. The basic idea is to first construct an adiabatic Gibbons--Hawking Ansatz which is a singular circle fibration over $\RR^3$ outside a compact set and has $2(k-1)$ $\mathbb{Z}_2$-symmetric Taub--NUT singularities $\{\pm p_i/\epsilon\}_{i=1}^{k-1}\subset \RR^3$. Then  we glue in a double branched cover of  Atiyah--Hitchin metric, which is also a circle fibration, into the compact part of $\RR^3$.
After the quotient of the involution action $x\rightarrow -x$ in $\RR^3$, this approximate metric gives 
 the correct $D_{k}$ asymptotic topology at infinity (given by the multi-Taub--NUT metrics) and is the smooth Atiyah--Hitchin metric near 0. However the metric is only approximate hyperK\"ahler. Finally we solve for the actual hyperK\"ahler metric by perturbation of the hyperK\"ahler triple. 
 
 The set up is similar to~\cite{SS} where the center piece is a double branched cover of $\calM_{2}^{0}$. We also modify the coefficients in the adiabatic Gibbons--Hawking ansatz so that it is compatible with the $AH$ metric. Except those changes, the asymptotic behavior of $\calM_{2}^{0}$ and Atiyah--Hitchin metric are similar since  $AH$ is the double cover of $\calM_{2}^{0}$, therefore the analysis used for perturbing the approximate triple to get the exact solution can be applied directly.

Now we describe the two pieces needed for gluing.

\subsection{The multi-Taub-NUT space $M_{\epsilon}$}
Fix $\delta \in (0,1/2)$, and let $\epsilon \in (0,\delta^{2})$ be a small parameter which will go to 0 in the limit later. Fix a set of distinct points $p_{1}, \dots, p_{k-1} \in \{x\in \RR^{3}: |x|>2\epsilon\}$, we consider a family of 4-manifolds $M_{\epsilon}$, each of which is a singular circle fibration over $\{x\in \RR^{3}: |x|>2\epsilon\}$. We consider
the following adiabatic family of Gibbon--Hawking ansatz on $M_{\epsilon}$
\begin{equation}\label{eq:ge}
g_{\epsilon}=h_{\epsilon} \frac{|dx|^{2}}{\epsilon^{2}}+h_{\epsilon}^{-1}\alpha^{2},  \ \alpha= *_{\epsilon} dh_{\epsilon}
\end{equation}
where $\alpha$ is a connection 1-form of the circle bundle, and $*_{\epsilon}$ is the Hodge star with respect to the flat metric $\frac{|dx|^{2}}{\epsilon^{2}}$ on the base $\RR^{3}_{|x|>2\epsilon}$; the singular harmonic function $h_{\epsilon}$ on $\RR^{3}_{|x|>2\epsilon}$ is given by
\begin{equation}\label{eq:he}
h_{\epsilon}=1-\frac{\epsilon}{|x|} +\sum_{v=1}^{k-1} \left(\frac{\epsilon}{2|x+p_{v}|}+\frac{\epsilon}{2|x-p_{v}|}\right).
\end{equation}

We denote $\phi$ to be the singular fibration associated with the Gibbons--Hawking ansatz~\eqref{eq:ge} by
\begin{equation}
\phi: M_{\epsilon} \rightarrow \RR^{3}_{|x|> 2\epsilon}
\end{equation}
and note the fibration is only singular at $P:=\{0, \pm p_{1}, \dots, \pm p_{k-1}\}$. 
 This ansatz is modified from~\cite[equation (1.4)]{SS} by replacing the coefficients of $\frac{1}{|x|}$ from $2\epsilon$ to $\epsilon$ which will match the  Atiyah--Hitchin metric in the compact part later.  Note that, by a change of variables $\tilde x=x/\epsilon, q_v=p_v/\epsilon$, the harmonic function $h_\epsilon$ can be written as 
 $$
 h_{\epsilon}=1-\frac{1}{|x|} +\sum_{v=1}^{k-1} \left(\frac{1}{2|x+q_{v}|}+\frac{1}{2|x-q_{v}|}\right)
 $$
 which gives a Gibbon--Hawking ansatz with Taub--NUT singularities at $q_v$, therefore this matches the name ``multi-Taub--NUT space'' for $A_{k-1}$ ALF spaces.
 
The involution action of $\iota: x\mapsto -x$ on $\RR^{3}$ applies to the fibration. Note that the asymptotic of $h_{\epsilon}$ is given by
\begin{equation}
h_{\epsilon}=1+\frac{\epsilon(2k-4)}{2|x|} + O(\epsilon|x|^{-3}), \ |x|\gg 1
\end{equation}
which, after quotient by the involution $\iota$, gives the asymptotic topology of a dihedral $D_{k}$ ALF.

Such behavior is characterized by ``fiber boundary structures''~\cite{MR1734130}, and we recall the definition below. We will be working with a manifold $X$ with boundary $\partial X$, and a boundary defining function $\rho$ for $\partial X$ is a smooth function on $X$ such that $\rho|_{\partial X}=0$ and $d\rho$ is nowhere vanishing on $\partial X$. 
\begin{definition}
A compact manifold with boundary $(X,g)$ has a fiber boundary structure, or $\phi$-structure, if there is a fibration $\phi: \partial X \rightarrow Y$ where $Y$ is a compact manifold without boundary and $F$ is the fiber. The space of $\phi$-vector fields $\mathcal{V}_{\phi}$ is given by b-vector fields (i.e. they are tangent to $\partial X$) which also satisfy
$$
v|_{\partial X}\in C^{\infty}(\partial X, T(\partial X/Y)),
$$
and
$$v(\rho) \in \rho^{2}C^{\infty}(X)$$
where $\rho$ is a boundary defining function for $\partial X$.
\end{definition}
Locally using coordinates $\phi(y_{1},\dots, y_{b}, z_{1}, \dots, z_{f})=(y_{1}, \dots, y_{b})$, the $\phi$-vector fields are spanned by
$$
\rho^{2}\frac{\partial}{\partial \rho}, \rho\frac{\partial}{\partial y_{1}}, \dots, \rho\frac{\partial}{\partial y_{b}}, \frac{\partial}{\partial z_{1}}, \dots, \frac{\partial}{\partial z_{f}}.
$$
And a $\phi$-metric is a smooth metric on the $\phi$-tangent bundle $T_{\phi}X$,  locally given by
$$
g_{\phi}=\frac{d\rho^{2}}{\rho^{4}} + \frac{\phi^{*} g_{Y}}{\rho^{2}} + g_{F}.
$$

The fibration $\phi: M_{\epsilon}\rightarrow \RR^{3}_{|x|>2\epsilon}$ can be lifted to the radially-compactified base $\overline{\RR^{3}}\setminus\{|x|\leq 2\epsilon\}$. Here the radial compactification of $\RR^{3}$ is obtained by adding a sphere $\bS^{2}=\partial \overline{\RR^{3}}$ at infinity. Using $x\in \RR^{3}$ as the usual coordinates, the compactification introduces the inverted polar coordinates $\rho=1/|x|, y=\frac{x}{|x|} \in \bS^{2}$ near the boundary, and the metric $g_{\epsilon}$  in~\eqref{eq:ge} lifts to a fiber boundary metric of the form
$$
g_{\epsilon}=h_{\epsilon}(\frac{d\rho^{2}}{\rho^{4}}+\frac{|dy|^{2}}{\rho^{2}}) + h_{\epsilon}^{-1}\bar \alpha^{2}, 
$$
where $h_{\epsilon}$ and $\bar\alpha=*_{\epsilon}d h_{\epsilon}$ lift to be smooth on $\overline{\RR^{3}}\setminus\{|x|\leq 2\epsilon\}$ (one can check this by changing variables in~\eqref{eq:he}). A detailed description was given in~\cite[Proposition 3.4]{SS}.

\subsection{The branched cover $\wAH_{\epsilon}$}
We will glue the (double branched cover) of Atiyah--Hitchin metric into the center. Following the notation in~\cite{SS}, we denote by $\widehat{AH}$ the space $\CC\PP^{1}\times \CC\PP^{1} \setminus \text{anti-diagonal}$, or in coordinates: 
\begin{equation}
\widehat{AH}=\{(z,w) \in \CC^{2} \times \CC^{2}: \ z_{1}\bar w_{1} + z_{2} \bar w_{2}\neq 0\}/\CC^{*}\times \CC^{*}
\end{equation}
This space can be identified via the space of X- and Y-ellipsis (see~\cite[Appendix A]{SS} for details). In the region near infinity, one can use coordinates $\{X\in \CC^{3}|\sum_{i=1}^{3} X_{i}^{2}=1\}$. If we write $X=\tilde x + i\xi, \ \tilde x, \xi \in \RR^{3}$, then they satisfy
$$|\tilde x|^{2} =1+|\xi|^{2},  \ \tilde x \cdot \xi=0.$$
There are two involution maps on $\widehat{AH}$:
$$
\begin{aligned}
s: (\tilde x, \xi) \mapsto (-\tilde x, -\xi)\\
r: (\tilde x, \xi) \mapsto (\tilde x, -\xi).
\end{aligned}
$$
The relation between $\widehat{AH}$, the Atiyah--Hitchin space AH, and the $D_{0}$ monopole moduli space $\mathcal{M}_{2}^{0}$ is given by 
\begin{equation}
AH=\widehat{AH}/s, \ \mathcal{M}_{2}^{0} = AH/r. 
\end{equation}
The relation implies that near infinity $\widehat{AH}$ is a circle bundle $\psi: \widehat{AH} \rightarrow \RR^{3}\setminus\{|x'|>1\}$
 with the following asymptotic
\begin{equation}\label{eq:AH}
g_{\widehat{AH}} = (1-\frac{1}{|x'|})|dx'|^{2} + (1-\frac{1}{|x'|})^{-1}(\alpha')^{2} + O(e^{-|x'|})
\end{equation}
where $x'$ is bijectively related to $\tilde x$ in an implicit way. In particular,  $AH$ and its branched cover $\widehat{AH}$ are both strongly ALF spaces in the sense of~\cite[Definition 3.5]{SS}. Recall a  $\phi$-metric $g$ is called ``strongly ALF'' if it is close to an exact Gibbons-Hawking ansatz of $A_{k}$ or $D_{k}$ ALF space $g_{ALF}$ modulo an error vanishing to infinite order, i.e. outside a compact set $|g-g_{ALF}|=\calO(\rho^{\infty})$ where $\rho$ is a boundary defining function for $g$ with respect to the $\phi$-structure. In particular, both $AH$ and $\widehat{AH}$ satisfy this condition, with an exponentially decaying error.
Having this condition ensures the asymptotic analysis at infinity can be applied the same way. As pointed out in~\cite[Theorem C.4 and Remark C.5]{SS}, when $g$ is strongly ALF, one can construct a ``nicer'' parametrix to solve the Poisson equation $\Delta_{g}u=f$. When the right hand side $f$ is smooth and decaying, one can get a smooth decaying solution $u$. On the other hand for a generic ALF metric one only expects polyhomogeneity rather than smoothness for $u$ in this case. This is used in the second step of the proof of Theorem~\ref{thm:main}, where the linearized operator is given by 12 copies of scalar Laplace operators, and one needs to iteratively solve away the error and make sure the solution is still smooth. 

To connect the asymptotics of $g_{\widehat{AH}}$ with the adiabatic Gibbons--Hawking ansatz constructed in~\eqref{eq:ge},  we modify~\eqref{eq:AH} to add the Taub-NUT singularities and get
\begin{equation}
g_{\widehat{AH},\epsilon}=h_{\epsilon}' |dx'|^{2} + (h_{\epsilon}')^{-1} (\alpha')^{2}
\end{equation}
where 
$$
h_{\epsilon}'=1-\frac{1}{|x'|} +\sum_{v=1}^{k-1}\frac{\epsilon}{|p_{v}|}, \ \alpha'=*dh_{\epsilon}'.$$ 
Here $|x'|>\frac{1}{\delta}$ for $\delta \ll 1$.  
We call the associated circle fibration total space $\wAH_{\epsilon}$ and the fibration
\begin{equation}
\psi: \wAH_{\epsilon} \rightarrow \RR^{3}\setminus\{|x'|>1/\delta\}.
\end{equation}
Later we will connect the large neck region $\{\frac{1}{\delta} < |x'|< \frac{\delta}{\epsilon}\}$ of $\wAH_{\epsilon}$ to the region of $\{\frac{\epsilon}{\delta} <|x|<\delta\}$ in the multi-Taub-NUT $M_{\epsilon}$.

 With the same compactification at infinity of $\RR^{3}$, we can see that $g_{\wAH, \epsilon}$ also lifts to be a $\phi$-metric for each $\epsilon$.

\subsection{Gluing construction}
The gluing is done on the connecting neck region
 $$
 \{\frac{1}{\delta}<|x'| <\frac{\delta}{\epsilon}\} \subset \widehat{AH}_{\epsilon}
 $$
and 
$$
\{\frac{\epsilon}{\delta} <|x|<\delta\} \subset M_{\epsilon}.
$$
Recall $\epsilon<\delta^{2}\ll1$.  We use the identification map $x'=\kappa(x)= x/\epsilon, \kappa^{*}(\alpha_{0})=\alpha'$ to identify $g_{\epsilon}$ and $g_{\widehat{AH}, \epsilon}$, and note that under this map $h_{\epsilon}$ and $h_{\epsilon'}$ are identified up to $O(\epsilon^{3})$. 

However in order to apply the analysis, we need to modify the space.
The total space is glued from two parts: $\cW_{0}$ corresponding to the Atiyah-Hitchin metrics in the center,  and $\cW_{1}$ corresponding to the multi-Taub-NUT singularities. Here $\cW_{0}$ and $\cW_{1}$ are essentially $\widehat{AH}_{\epsilon}$ and $M_{\epsilon}$ with the additional steps of compactification at infinity and singularity resolution at $\epsilon=0$.  The construction of $\cW_{1}$ and $\cW_{0}$ are similar to~\cite{SS} with a few modifications. See Figure~\ref{f:F1} for an illustration.

Before we talk about the construction, we will first briefly recall the idea of real blow ups. For a general introduction the readers are referred to~\cite{melrose1996differential} and for the specific construction for this paper we referred to~\cite[Section 3.2]{SS}. Given $X$ a manifold with corners, we call a submanifold $Y \subset X$  a $p$-submanifold if  around any point $q\in Y$, there is a neighborhood $\calU \subset X$ with a product
decomposition of the form $(Y \cap \calU) \times [0,\epsilon)^{\ell-\ell'} \times (-\epsilon, \epsilon)^{m-m'}$. 
If $Y$ is a $p$-submanifold of $X$, we define the blowup $[X; Y]$ as the disjoint union of $X \setminus Y$ and $SN^+ Y$, the inward-pointing
spherical normal bundle of $Y$, with a smooth structure
generated by the lifts of all smooth functions on $X$ and
the spherical normal coordinates around $Y$.  This blowup has a new front face equal to the set of all spherical
normal vectors.
Any vector field $V$ on $X$ lifts to a vector field on $[X;Y]$; this lift is nonsingular if and only if $V$ is tangent to $Y$. 
Blowup has the effect of making such tangent vector fields less degenerate. 

Now we introduce the construction.
The construction is first done on the double cover $\tcW_{i}, \ i=0,1$, and the final space $\cW$ is the quotient by the involution $\iota$. 
First we construct $\tcW_{0}$, which is defined as
\begin{equation}
\tcW_{0}=\{(z',\epsilon') \in \wAH_{\epsilon} \times [0,\epsilon_{0}): \frac{1}{\delta}<|\psi(z')| <\frac{\delta}{\epsilon'}\}.
\end{equation}

The construction of $\tcW_{1}$ is the same as in~\cite[Section 4.1]{SS}, obtained by pulling back the product  $[0,\delta^{2})_{\epsilon} \times M_{\epsilon}$   to the resolved base $\widetilde{B}$, where $\widetilde{B}$ is obtained through the blow up
\begin{equation}
\widetilde{B}=[\bar \RR^{3}_{x}\times [0, \delta^{2})_{\epsilon}; \{(0,0), \pm p_{i}\times \{0\}, i=1, \dots, k-1\}] \cap \{|x|>\epsilon/\delta\}
\end{equation}
which introduces $2k-1$ front faces. We then pull back the circle fibration $\phi: M_{\epsilon} \rightarrow \bar\RR^{3}\setminus\{|x|\leq \epsilon/\delta\}$ to $\widetilde B$, such that when restricting to each $\epsilon>0$ it is the same as the original fibration $\phi$. On the other hand, when restricting to $\epsilon=0$ it lifts to a circle bundle over the blown up space $[\bar \RR^{3}; \{0, \pm p_{i}\}]$.

We construct the total space by gluing $\tcW_{1}$ into $\tcW_{0}$, identifying the torus neighborhood using the lift of the map $\kappa$ described earlier, i.e. we identify the neighborhood 
$$
\{(z',\epsilon'): 0\leq \epsilon' \leq \epsilon_{0}, \delta^{-1}<|\psi(z')| <\delta (\epsilon')^{-1}\}\subset \tcW_{0}
$$
and
$$
\{(z,\epsilon): 0\leq \epsilon\leq \epsilon_{0}, \epsilon \delta^{-1}\leq |\phi(z)|\leq \delta\}\subset \tcW_{1}
$$
via the lift of the map $\phi(z)=x:=\psi(z') \epsilon, \epsilon=\epsilon'$, and we denote this map by $\kappa$. The total space is defined as
\begin{equation}
\tcW:=\tcW_{0}\coprod_{\kappa} \tcW_{1}, \ \cW:=\tcW/\iota.
\end{equation}
Here $\cW$ is a manifold with corners, where the boundary faces include: $I_{\infty}$ which corresponds to the fibration over the boundary of the compactified $\RR^{3}$ of $\cW_{1}$; $X_{0}$ that is the lift of the fibration to the boundary face created by blowing up $(0,0)$; $X_{i}, i=1, \dots k-1$, that are the lift of the fibration to the boundary faces created by blowing up $\{\pm p_{i}, 0\}$; $X_{ad}$ the remaining part of $\epsilon=0$.
See Figure~\ref{f:F1} for a picture of the total space.
\begin{figure}[ht]
		\centering
		\includegraphics[width=\textwidth]{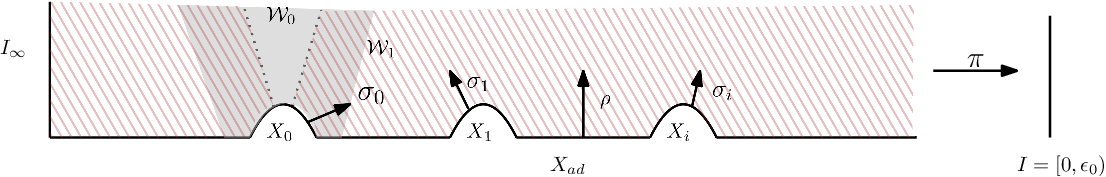}
		\caption{An illustration of the resolved total space $\cW$ obtained from gluing $\cW_{0}$ and $\cW_{1}$, with boundary faces and associated boundary defining functions on $\pi^{-1}(0)$.}
		\label{f:F1}
	\end{figure}

There is a projection map $\pi: \cW\rightarrow [0,\epsilon_{0})_{\epsilon}$, such that for each nonzero $\epsilon$ the fiber is an approximate ALF metric, while for $\epsilon=0$ the fiber is a manifold with corners. We also note that $\epsilon$ is the total boundary defining function, i.e. if we define $\sigma_{\nu}$ to be the boundary defining function of $X_{\nu}, \nu=0, 1, \dots, k-1$, and $\rho$ to be the boundary defining function of $X_{ad}$, then the product of all boundary defining functions on the front face is
\begin{equation}
\epsilon=\rho \sigma_{0}\dots \sigma_{k}.
\end{equation}

\subsection{Solving for the hyperK\"ahler triples}
By the gluing construction discussed above, we arrived at a family of approximate metrics, each living on the fiber $\pi^{-1}(\epsilon)$ of $\cW$. We will follow the set up in ~\cite{SS} to perturb these metrics and solve for the exact hyperK\"ahler triples on each fiber. Recall that every hyperK\"ahler metric on a 4-manifold gives a hyperK\"ahler triple $\bom =(\bom_{1}, \bom_{2}, \bom_{3})$ such that each $\omega_{i}$ is a symplectic form and they satisfy the following equation~\cite{Donaldson}
\begin{equation}
0=Q(\bom)_{jk}:=\bom_{j} \wedge \bom_{k} - \frac{1}{3}(\sum_{i=1}^{3} \bom_{i}^{2}) \delta_{jk}
\end{equation}
Conversely a hyperK\"ahler triple determines a hyperK\"ahler metric~\cite[Theorem 2.3]{SS}.

For an adiabatic Gibbons-Hawking ansatz in~\eqref{eq:ge}, the triple is given by the following $\bom_{\epsilon}=(\bom_{1},\bom_{2}, \bom_{3})$~\cite[(2.21)]{SS}
\begin{equation}\label{eq:getriple}
\begin{aligned}
\bom_{1}=\alpha\wedge \frac{dx_{1}}{\epsilon}+h_{\epsilon}\frac{dx_{2}\wedge dx_{3}}{\epsilon^{2}}\\
\bom_{2}=\alpha\wedge \frac{dx_{2}}{\epsilon}+h_{\epsilon}\frac{dx_{3}\wedge dx_{1}}{\epsilon^{2}}\\
\bom_{3}=\alpha\wedge \frac{dx_{3}}{\epsilon}+h_{\epsilon}\frac{dx_{1}\wedge dx_{2}}{\epsilon^{2}}
\end{aligned}
\end{equation}
which can be written as $\Omega_{\epsilon} + db$ where $b \in O(|\epsilon|^{3})$ and $\Omega_{\epsilon}$ is given by replacing $h_{\epsilon}$ by its leading term $1-\frac{\epsilon}{|x|} +\sum_{v=1}^{k-1}\frac{\epsilon}{|p_{v}|}$. 
Similarly for the branched cover $\wAH_{\epsilon'}$ the triple is given by $\bom_{AH, \epsilon'}=\Omega_{\epsilon'} + da$ where $a=O(e^{-|x'|})$ and $\Omega_{\epsilon'}$ is given by
\begin{equation}\label{eq:gAHtriple}
\begin{aligned}
\bom_{1}'=\alpha'\wedge dx_{1}'+h_{\epsilon}' dx_{2}'\wedge dx_{3}'\\
\bom_{2}'=\alpha'\wedge dx_{2}'+h_{\epsilon}' dx_{3}'\wedge dx_{1}'\\
\bom_{3}'=\alpha'\wedge dx_{3}'+h_{\epsilon}' dx_{1}'\wedge dx_{2}'\\
\end{aligned}
\end{equation}
with 
$$
h_{\epsilon}'=1-\frac{1}{|x'|} +\sum_{v=1}^{k-1}\frac{\epsilon}{|p_{v}|}.
$$

We will patch the two triples on each fiber to an initial approximation, then solve the equation and show that the solutions are uniformly controlled up to the $\pi^{-1}(0)$ face. This will give a family of smooth hyperK\"ahler triples hence the corresponding family of hyperK\"ahler metrics. To give a more precise statement, we use the following space:
\begin{definition}
Let $T_{\phi}(\cW/I)$ denote the subspace of the total $\phi-$tangent space on $\cW$ that is spanned by $\phi$-tangent vectors along the $\epsilon-$fiber direction. Similarly the $T^{*}_{\phi}(\cW/I)$ and $\Omega^{2}_{\phi}(\cW/I)$ denote the fiberwise $\phi-$cotangent space and (smooth) 2-forms.
\end{definition}
The symplectic forms will be in the space of $\Omega^{2}_{\phi}(\cW/I)$, and the operator $Q$ maps into the space $S_{0}^{2}\RR^{3}\otimes \Lambda^{4}T_{\phi}^{*}(\cW/I)$.  With these notations, now we state the theorem.
\begin{theorem}\label{thm:main}
Fix $\{0, p_{1}, \dots, p_{k-1}\}$ a set of distinct $k$ points. There exists $\epsilon_{0}>0$, such that on $\pi^{-1}([0,\epsilon_{0})) \subset \cW$,  there exists smooth triple $\bom_{Se}=(\bom_{1}, \bom_{2}, \bom_{3})$ with $\bom_{i}\in\Omega_{\phi}^{2}(\cW/I)$ satisfying $Q(\bom)=0$. The corresponding fiber metric $g_{Se, \epsilon}$ satisfies
\begin{enumerate}
\item $\{g_{Se, \epsilon}\}_{\epsilon\in I}$ is a family of smooth fiberwise $\phi$-metric on $\cW$;
\item $g_{Se, 0}|_{X_{0}}$ restricts to be the Atiyah--Hitchin metric, and $g_{Se, 0}|_{X_{i}}$ restricts to be the Taub--NUT metric for $i=1, \dots, k-1$.
\end{enumerate}
 \end{theorem}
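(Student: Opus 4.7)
The plan is to follow the Schroers--Singer perturbation strategy in the $\phi$-calculus framework on the compactified total space $\cW$: (i) patch the triples from~\eqref{eq:getriple} and~\eqref{eq:gAHtriple} into an $\iota$-equivariant approximate triple $\bom_{ap}$ on each fiber $\pi^{-1}(\epsilon)$, (ii) estimate the error $Q(\bom_{ap})$ in weighted $\phi$-norms, (iii) invert the linearization of $Q$ at $\bom_{ap}$ with bounds uniform in $\epsilon\in[0,\epsilon_{0})$, and (iv) close a contraction mapping argument for a correction of the form $\bom_{Se}=\bom_{ap}+d\eta$.

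For step (i), fix a smooth $\iota$-invariant cutoff $\chi$ supported in the neck $\{\epsilon/\delta<|x|<\delta\}\subset\tcW_{1}$, identified via the map $\kappa$ with $\{\delta^{-1}<|x'|<\delta/\epsilon\}\subset\tcW_{0}$, and set
\begin{equation*}
\bom_{ap} := \chi\,\bom_{\epsilon} + (1-\chi)\,\kappa^{*}\bom_{AH,\epsilon}
\end{equation*}
on the overlap, with $\bom_{ap}$ equal to the respective triples elsewhere. Using the identification $h_{\epsilon}=h'_{\epsilon}+O(\epsilon^{3})$ noted in the gluing construction, together with the exponentially small $a$ in $\bom_{AH,\epsilon}=\Omega_{\epsilon'}+da$, the mismatch between the two triples is an exact form of controlled size. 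A direct computation then shows $Q(\bom_{ap})$ is supported in the neck region with high polynomial vanishing order, measured in the natural $\phi$-weighted H\"older spaces defined using the boundary defining functions $\rho,\sigma_{0},\dots,\sigma_{k}$.

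Step (iii) is the analytic heart. Linearizing $Q(\bom_{ap}+d\cdot)$ gives an elliptic $\phi$-operator $L$ on $\iota$-equivariant fiberwise 1-forms plus a quadratic remainder $N$. The normal operators of $L$ at the boundary faces $X_{0}, X_{1},\dots,X_{k}, X_{ad}$ and $I_{\infty}$ are the linearized hyperK\"ahler operators on the Atiyah--Hitchin metric, on Taub--NUT metrics, on the adiabatic Gibbons--Hawking base, and on the flat ALF model, respectively. The main obstacle is showing that $L$ admits a right inverse with bound \emph{uniform} in $\epsilon$ across the corner structure of $\cW$. Invertibility at each face follows from the strongly ALF property of $\wAH$ and the fact that $\wAH$ and $\calM_{2}^{0}$ have identical asymptotics (so the indicial roots and cokernel analysis of~\cite{SS} apply verbatim at $X_{0}$), together with the standard analysis on Taub--NUT and the flat model. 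The $\phi$-calculus Fredholm package then assembles these face-by-face invertibilities into a uniformly bounded right inverse $G$ for $L$ on $\cW$, exactly as in~\cite{SS}; this transfer of the analysis is possible precisely because the only change from~\cite{SS} is the model at $X_{0}$, not the underlying $\phi$-geometry.

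For step (iv), with the bound $\|Q(\bom_{ap})\|=O(\epsilon^{N})$ and a uniformly bounded $G$, rewrite $Q(\bom_{ap}+d\eta)=0$ as
\begin{equation*}
\eta = -G\bigl(Q(\bom_{ap})+N(\eta)\bigr),
\end{equation*}
where $N$ is quadratic in $\eta$ and gains an $\epsilon$-factor in the chosen weighted norm. A contraction mapping argument in a small ball produces a unique $\iota$-equivariant solution $\eta$, which descends to $\cW$. Assertion (1) follows from elliptic $\phi$-regularity of $L$, giving smoothness of $\bom_{Se}$ as a $\phi$-object. Assertion (2) holds because the inhomogeneous term $Q(\bom_{ap})$ vanishes along $\pi^{-1}(0)$, hence so does $\eta$, so $\bom_{Se}|_{X_{0}}$ restricts to the Atiyah--Hitchin triple and $\bom_{Se}|_{X_{i}}$ restricts to the Taub--NUT triple. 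As emphasized, the single hard point is the uniform invertibility of $L$ at all boundary faces simultaneously, and this step is exactly what the $\phi$-calculus Fredholm theory from~\cite{SS} is designed to deliver.
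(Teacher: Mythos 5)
Your overall architecture (patch, estimate error, invert the linearization uniformly, contract) is the right framework, and your discussion of step (iii) --- assembling face-by-face inverses at $X_{0},X_{1},\dots,X_{k},X_{ad}$ using that $\wAH$ is strongly ALF with the same asymptotics as $\calM_{2}^{0}$ --- matches the paper. But there is a genuine gap in how you pass from the initial approximation to the contraction. You assert that ``a direct computation shows $Q(\bom_{ap})$ is supported in the neck region with high polynomial vanishing order,'' and you use this to go straight to the fixed-point argument. The paper computes that the initial error is only $Q(\bom_{\chi})\in \epsilon^{3}C^{\infty}$ (and vanishes to infinite order only \emph{away} from the faces $X_{\nu}$); it is not of arbitrarily high order at the front faces. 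For precisely this reason the paper inserts an intermediate step that your proposal omits entirely: an iterative construction, working near the corners $X_{\nu}\cap X_{ad}$ as in Section 5 of~\cite{SS}, of a formal power series correction $\zeta=\bom_{\chi}+dc$ with $Q(\zeta)\in\dot C^{\infty}$, i.e.\ error vanishing to infinite order at \emph{every} boundary face. Only after this improvement is the uniformly bounded inverse applied. The patched inverse of~\cite{SS} comes with losses in the boundary defining functions $\rho,\sigma_{0},\dots,\sigma_{k}$, so an $O(\epsilon^{3})$ right-hand side is not known to be small enough in the relevant weighted space to close your contraction, and your claim of high-order vanishing is exactly the statement that needs proof (and is supplied in the paper by the iteration, not by the patching).

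A secondary consequence: your justification of assertion (2) --- ``$Q(\bom_{ap})$ vanishes along $\pi^{-1}(0)$, hence so does $\eta$'' --- also leans on the unproven decay. With the paper's two-stage argument this is clean: the formal solution already restricts to the AH and Taub--NUT triples on $X_{0}$ and $X_{i}$ because those are the leading terms of the expansion, and the final correction obtained from the implicit function theorem is $O(\epsilon^{\infty})$, hence vanishes on all of $\pi^{-1}(0)$. You should either import the iterative improvement step or prove a mapping property for $G$ strong enough that an $\epsilon^{3}$ error suffices; as written, the bridge between your steps (ii) and (iv) is missing.
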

\begin{proof}
The proof follows almost the same as in~\cite{SS}. Such a strategy has also been used in~\cite{MZ1, MZ2}. We first construct an approximate solution $\zeta_{0}$ for the hyperK\"ahler triple by patching the two model metrics together using a cut off function $\chi$ that is supported near $\cW_{0}$:
$$
\bom_{\chi}:=\chi \bom_{AH, \epsilon'} + (1-\chi) \bom_{\epsilon}
$$ 
where the two hyperK\"ahler triples are defined in~\eqref{eq:getriple} and~\eqref{eq:gAHtriple}. Note that the harmonic function in $\bom_{AH,\epsilon'}$ and $\bom_{\epsilon}$ agree up to order $\epsilon^{3}$, therefore the initial approximation solves the equation up to the following error 
$$
Q(\bom_{\chi})\in \epsilon^{3}C^{\infty}(\cW, S_{0}^{2}\RR^{3}\otimes \Lambda^{4}T_{\phi}^{*}(\cW/I))
$$ 
which also vanishes to infinite order away from $X_{\nu}$.

The second step is to solve the equation iteratively to $\epsilon^{\infty}$ error. This step  follows from the discussion in~\cite[Section 5]{SS}, where they solve the error iteratively near the corner of $X_{\nu}\cap X_{ad}$ on the two faces.  Recall that the linearized operator of $Q$ is given by 12 copies of scalar Laplacians by taking basis of self-dual 2-forms. When restricting to the solvability near $X_{\nu}$, note that the new component $AH$ is still a strongly ALF metric hence the analysis from~\cite{SS} carries through. The solvability near $X_{ad}$ is the same  as before. As a result we can iteratively construct a formal power series solution 
$\zeta = \bom_{\chi} + dc$ such that
$$
Q(\zeta) \in \dot C^{\infty} (\cW,S_{0}^{2}\RR^{3}\otimes \Lambda^{4}T_{\phi}^{*}(\cW/I))
$$
that is, the error vanishes to infinite order on every boundary face. 

The third step is to show there is a bounded inverse of the linearized operator on $\cW$. This step again relies on that one can patch inverses on $X_{\nu}$ and $X_{ad}$. Since the only difference in the structure of $X_{\nu}$ from~\cite{SS} is that $X_{0}$ is changed from $\overline{\calM_{2}^{0}}$ by $\overline{AH}$, the inverse is constructed in the same way as in~\cite[Theorem C.3]{SS} for a strongly ALF space. On the other hand, on $X_{ad}$ the inverse is constructed explicitly using Fourier series and Green's function where the only difference is the coefficient in $h_\epsilon$ which does not change the proof. As a result $\Delta_{\zeta}$ is an invertible operator on the domain constructed in~\cite[Theorem 6.6]{SS}.

Finally by implicit function theorem we get the exact solution $\omega_{Se}$ with $Q(\omega_{Se})=0$. The smoothness comes from the iterative series and invertibility of the linearized operator, and the restriction on $X_{\nu}$ follows because AH and TN are the 0-th order terms in the  metric expansion.
\end{proof}

As a corollary, such a family of metrics degenerates into the front face, and (under correct scaling and choice of base points) this gives Gromov-Hausdorff convergence to $AH$ and $TN$ respectively.

\begin{corollary}
\begin{enumerate}
\item 
Take $\{z_{\epsilon}\}$ satisfying $|\phi(z_{\epsilon})|<\frac{\epsilon}{\delta}$, then the sequence $(g_{Se, \epsilon}, z_{\epsilon})$ converges to $(g_{AH}, 0)$ in the Gromov-Hausdorff limit as $\epsilon\rightarrow 0$;
\item Take $\{z'_{\epsilon}\}$ satisfying $|\phi(z'_{\epsilon})-p_{i}|<\epsilon$, then $(g_{Se, \epsilon}, z'_{\epsilon})$ converges to $(g_{TN}, 0)$ in the Gromov-Hausdorff limit.
\end{enumerate}
\end{corollary}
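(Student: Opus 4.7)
The plan is to reduce both convergence statements to the smoothness claim in Theorem~\ref{thm:main}(2), using the standard principle that smooth convergence of Riemannian metrics on compact sets (with converging base points) upgrades to pointed Gromov--Hausdorff convergence. The work therefore lies essentially in matching the fiberwise $\phi$-metric structure near the boundary faces $X_{0}$ and $X_{i}$ with the intrinsic Atiyah--Hitchin and Taub--NUT metrics on bounded regions.

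For part (1), first observe that the condition $|\phi(z_{\epsilon})|<\epsilon/\delta$ places $z_{\epsilon}$ on the $\tcW_{0}$ side of the gluing, so under the identification $\kappa: x\mapsto x'=x/\epsilon$ the rescaled position $\psi(z_{\epsilon})$ lies in the bounded region $\{|x'|<1/\delta\}\subset \wAH_{\epsilon}$; in the limit $\epsilon\to 0$ this sits inside a fixed compact neighborhood of the origin in the Atiyah--Hitchin model. Fix any $R>0$. By Theorem~\ref{thm:main}(1), $\{g_{Se,\epsilon}\}$ extends to a smooth family of fiberwise $\phi$-metrics up to $\epsilon=0$, and by Theorem~\ref{thm:main}(2), $g_{Se,0}|_{X_{0}}=g_{AH}$. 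Hence on compact subsets of $X_{0}$ the fiber metrics $g_{Se,\epsilon}$ converge in $C^{\infty}$ to $g_{AH}$. For sufficiently small $\epsilon$ the ball $B_{R}(z_{\epsilon},g_{Se,\epsilon})$ is contained in such a fixed compact neighborhood, and combining smooth convergence there with the convergence of base points $z_{\epsilon}\to 0$ under the $\kappa$-identification yields the pointed Gromov--Hausdorff convergence $(g_{Se,\epsilon}, z_{\epsilon})\to (g_{AH},0)$.

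Part (2) follows the same template near the face $X_{i}$: the natural rescaling around $p_{i}$ is $\tilde x = (x-p_{i})/\epsilon$, under which the condition $|\phi(z'_{\epsilon})-p_{i}|<\epsilon$ becomes $|\tilde x(z'_{\epsilon})|<1$, a bounded region of the Taub--NUT model attached at $X_{i}$, and Theorem~\ref{thm:main}(2) applied with boundary face $X_{i}$ closes the argument. The main technical obstacle in either case is verifying that $B_{R}(z_{\epsilon}, g_{Se,\epsilon})$ really stays inside the compact subset of the relevant boundary face on which smooth convergence has been established; equivalently, one must check that the fiberwise $\phi$-metric distance matches, up to negligible correction, the intrinsic Atiyah--Hitchin (resp.\ Taub--NUT) distance there. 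This is essentially bookkeeping, tracking how $\kappa$ and the leading behavior of $h_{\epsilon}$ near the blown-up faces interact with the $\phi$-structure built into $\cW$ in Section~2.3; no genuinely new analysis beyond what already underlies Theorem~\ref{thm:main} is required.
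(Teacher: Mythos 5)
Your proposal is correct and follows exactly the route the paper intends: the paper offers no separate argument for this corollary, presenting it as an immediate consequence of Theorem~\ref{thm:main} (smoothness of the family of fiberwise $\phi$-metrics up to $\epsilon=0$ together with the restrictions $g_{Se,0}|_{X_{0}}=g_{AH}$ and $g_{Se,0}|_{X_{i}}=g_{TN}$), upgraded to pointed Gromov--Hausdorff convergence via smooth convergence on compact sets with the rescalings $x'=x/\epsilon$ and $(x-p_{i})/\epsilon$. Your identification of the remaining bookkeeping (that fixed-radius balls around the base points stay in a compact subset of the relevant boundary face) is the right and only point that needs checking.
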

\begin{remark}
Different types of gravitational instantons are expected to relate to each other, and similar results of this type where gravitational instantons degenerate into another type can be seen in~\cite{Auvray, Cherkis, LSZ, salm1, salm2}.
\end{remark}

\section{Non-holomorphic minimal spheres}
In~\cite{MW2} Micallef and Wolfson showed that in the Atiyah--Hitchin space there is a special sphere  that is stable minimal and not holomorphic with respect to any complex structures in the hyperK\"ahler rotation. Using the gluing result above, we extend this result to $D_{k}$ spaces constructed above. As a perturbation result, the existence can be proved for $D_{k}$ spaces where the Taub-NUT singularities are sufficiently far from the origin. 

\begin{theorem}\label{thm2}
There exists $\epsilon_{0}$, such that for every $\epsilon \in (0, \epsilon_{0})$ there exists a stable minimal sphere in $g_{Se, \epsilon}$ that is non-holomorphic with respect to any of the compatible complex structures.   
\end{theorem}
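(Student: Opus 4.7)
The plan is to construct the minimal sphere in $(\pi^{-1}(\epsilon),g_{Se,\epsilon})$ as a small perturbation of the Micallef--Wolfson sphere $\Sigma_{0}\subset(AH,g_{AH})$, using the smoothness up to the boundary stratum $X_{0}$ provided by Theorem~\ref{thm:main}, and then to verify non-holomorphicity by a compactness argument as $\epsilon\to 0$.

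First I would fix the embedded stably minimal 2-sphere $\Sigma_{0}\subset AH$ produced in~\cite{MW2}, which is non-holomorphic with respect to every compatible complex structure in the twistor family $\PP^{1}$. Since Theorem~\ref{thm:main} asserts that $g_{Se,0}|_{X_{0}}=g_{AH}$ and that $\{g_{Se,\epsilon}\}$ is a smooth family of $\phi$-metrics on $\cW$, there is a compact neighborhood $U$ of $\Sigma_{0}$ in the interior of $X_{0}$ together with a smooth trivialization of a neighborhood of $U$ in $\cW$ in which $g_{Se,\epsilon}|_{U}\to g_{AH}|_{U}$ smoothly as $\epsilon\to 0$. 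Transporting $\Sigma_{0}$ through this trivialization produces an embedded sphere $\Sigma_{\epsilon}^{(0)}\subset\pi^{-1}(\epsilon)$ whose mean curvature tends to zero in $C^{k}$ for every $k$.

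Next I would correct $\Sigma_{\epsilon}^{(0)}$ to an exact minimal sphere by the standard implicit function theorem for the minimal surface equation, parametrizing nearby surfaces as normal graphs via the exponential map and linearizing to obtain the Jacobi operator $J_{\epsilon}$ on $\Gamma(N\Sigma_{\epsilon}^{(0)})$, a self-adjoint elliptic operator on $S^{2}$ depending smoothly on $\epsilon$. The operator $J_{0}$ is non-negative by stable minimality, and its kernel should consist precisely of the normal components of Killing fields generated by the $SO(3)$ isometry of $AH$ acting on $\Sigma_{0}$. Factoring out this finite-dimensional kernel (which just moves $\Sigma_{0}$ inside its $SO(3)$-orbit of minimal spheres), $J_{0}$ admits a bounded inverse on the $L^{2}$-orthogonal complement; by continuity so does $J_{\epsilon}$ on the analogous complement for small $\epsilon$. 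Picard iteration then produces an exact minimal sphere $\Sigma_{\epsilon}\subset(\pi^{-1}(\epsilon),g_{Se,\epsilon})$ which converges smoothly to $\Sigma_{0}$ as $\epsilon\to 0$.

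Finally I would verify non-holomorphicity by contradiction: assume there is a sequence $\epsilon_{n}\to 0$ and compatible complex structures $J_{n}$ on $(\pi^{-1}(\epsilon_{n}),g_{Se,\epsilon_{n}})$ such that $\Sigma_{\epsilon_{n}}$ is $J_{n}$-holomorphic. Each $J_{n}$ lies in the $\PP^{1}$-twistor family linearly determined by the hyperK\"ahler triple $\bom_{Se,\epsilon_{n}}$, which by Theorem~\ref{thm:main} varies smoothly up to $\epsilon=0$; hence $\{J_{n}\}$ lies in a compact family and a subsequential limit $J_{0}$ exists and is compatible with $g_{AH}$. Since $\Sigma_{\epsilon_{n}}\to\Sigma_{0}$ smoothly on compact sets, passing to the limit forces $\Sigma_{0}$ to be $J_{0}$-holomorphic in $(AH,g_{AH})$, contradicting~\cite{MW2}. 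The main obstacle I expect is the second step: identifying $\ker J_{0}$ precisely with the ambient $SO(3)$-infinitesimal isometries tangent to $\Sigma_{0}$ and closing the implicit function theorem modulo this finite-dimensional symmetry, because~\cite{MW2} assert stability rather than strict stability and one must verify that no additional degenerate Jacobi modes appear beyond those forced by symmetry.
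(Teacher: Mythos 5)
Your overall strategy---transplant the Micallef--Wolfson sphere from the Atiyah--Hitchin fiber $X_{0}$ into $\pi^{-1}(\epsilon)$ and correct it to an exact minimal sphere by an implicit function theorem---is the same as the paper's, which invokes White's implicit function theorem for minimal surfaces under perturbation of the ambient metric. Where you genuinely diverge is the non-holomorphicity step. You argue by compactness and contradiction, extracting a limiting compatible complex structure $J_{0}$ from the twistor spheres and appealing again to the Micallef--Wolfson non-holomorphicity statement. The paper instead uses a purely topological argument: the sphere has self-intersection $[\Sigma]\cdot[\Sigma]=-4$, whereas the adjunction formula in a hyperK\"ahler $4$-manifold forces any holomorphic sphere to have self-intersection $2\cdot\mathrm{genus}-2=-2$. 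Since the perturbed sphere lies in the same homology class, it can never be holomorphic for \emph{any} compatible complex structure and any $\epsilon$, with no limiting argument and in fact without using the non-holomorphicity part of~\cite{MW2} at all. The adjunction route is both shorter and more robust, and you may want to adopt it. Regarding the obstacle you flag in your second step: the $SO(3)$ isometry group of $AH$ preserves the central sphere as a set (the sphere is the exceptional orbit), so the Killing fields restrict to \emph{tangential} vector fields along $\Sigma_{0}$ and contribute nothing to the kernel of the normal Jacobi operator; the relevant nondegeneracy is exactly what is needed to apply White's theorem, and it is supplied by the strict stability established in~\cite{MW2}, so no quotient by a symmetry group is required. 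With that point settled, your argument closes and is correct.
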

\begin{proof}
We start with the approximate metric $g_{0, \epsilon}$ constructed via patching the $g_{\wAH, \epsilon}$ and $g_{\epsilon}$. Since the minimal sphere in $g_{\wAH}$ is compact, so by shrinking $\epsilon_{0}$ if necessary, there exists a sphere in each fiber $\Sigma_{\epsilon} \subset (AH|_{|x'|<\delta/\epsilon}, g_{\wAH, \epsilon})\subset (\pi^{-1}(\epsilon), g_{0, \epsilon})$ for every $\epsilon\in (0, \epsilon_{0})$. Note that this minimal sphere $\Sigma$ satisfies $[\Sigma]\cdot [\Sigma]=-4$ which is not $2*\text{genus}(\Sigma)-2$, hence it cannot be holomorphic in a hyperK\"ahler manifold via adjunct formula. Since the minimal sphere is stable in the Atiyah--Hitchin space~\cite{TsaiWang}, we can apply White's implicit function theorem~\cite{White} (which does not require the ambient manifold to be compact) to the actual hyperK\"ahler metric $g_{SE, \epsilon}$ perturbed from $g_{0, \epsilon}$, we get a stable minimal non-holomorphic sphere in the $D_{k}$ space. 
\end{proof}
 
\begin{corollary}\label{cor1}
There is an open neighborhood in the moduli space of hyperK\"ahler metrics on $K3$, such that each metric in this neighborhood contains a stable minimal sphere that is not holomorphic with respect to any compatible complex structures. 
\end{corollary}
\begin{remark}
The statement follows from \cite[Theorem 7.1]{Foscolo} via perturbation, however this construction provides a larger neighborhood and gives more explicit examples compared to the statement in~\cite{MW1}. 
\end{remark}
\begin{proof}
The proof follows exactly the same as in~\cite[Theorem 7.1]{Foscolo} by considering any $D_{k}$ metric constructed above as a building block in the collapsing sequence. Since the approximate metric $g_{\epsilon}$  constructed using $D_{k}$ bubbles contains the special sphere, by the same perturbation argument in the previous theorem we obtain the existence of a non-holomorphic minimal sphere in a neighborhood.  
\end{proof}

\begin{remark} We do not know whether the sphere we constructed is actually minimizing in those new ALF spaces and K3 surfaces. The model sphere in AH is area-minimizing, and in fact, calibrated~\cite{TsaiWang}. It is an interesting question whether any of such properties will be preserved after the gluing construction.
\end{remark}

\bibliographystyle{amsalpha}
\bibliography{ALF}

\end{document}